\newcommand{\rosso}[1]{{\color{red}{#1}}}
\def\Im{\mathop{\rm Im}\nolimits}
\def\Zb{{\mathbb Z}}
\def\0D{\Delta^{(0)}}
\def\1D{\Delta^{(1)}}
\newtheorem{theorem}{Theorem}[section]
\newtheorem{proposition}[theorem]{Proposition}
\newtheorem{lemma}[theorem]{Lemma}
\newtheorem{definition}[theorem]{Definition}
\def\build#1_#2^#3{\mathrel{
\mathop{\kern 0pt#1}\limits_{#2}^{#3}}}
\numberwithin{equation}{section}
\def\part{\partial}
\def\text{\hbox}
\def\build#1_#2^#3{\mathrel{
\mathop{\kern 0pt#1}\limits_{#2}^{#3}}}
\newcommand{\set}[1]{\left\{#1\right\}}
\newcommand{\brac}[1]{\ensuremath{\left( {#1} \right)}}
\newcommand{\sbrac}[1]{\ensuremath{\left[ {#1} \right]}}
\numberwithin{equation}{section}
\newcommand{\comment}[1]{\relax}
\title{\vspace*{-15mm}  Unital embeddings of Cuntz algebras\\ from path homomorphisms of graphs}
\author[P. M.~Hajac]{Piotr M.~Hajac}
\address[P. M.~Hajac]{Instytut Matematyczny, Polska Akademia Nauk, ul. \'Sniadeckich 8, Warszawa, 00-656 Poland}
\email{pmh@impan.pl }
\author[Y. Liu]{Yang Liu}
\address[Y. Liu]{SISSA, via Bonomea 265, 34136 Trieste  }
\email{yliu@sissa.it}
\begin{document}

\begin{abstract}\vspace*{-3mm}
Cuntz algebras $\mathcal{O}_n$, $n>1$, are celebrated examples of a separable infinite simple C*-algebra with a number of 
fascinating properties. Their K-theory allows an embedding of $\mathcal O_m$ in $\mathcal O_n$ whenever $n-1$ divides $m-1$. In 
2009, Kawamura provided a simple and explicit formula for all such embeddings. His formulas can be easily deduced by viewing Cuntz 
algebras as graph C*-algebras. Our main result is that, using both the covariant and contravariant functoriality of assigning graph C*-
algebras to directed graphs, we can provide explicit polynomial formulas for all unital embeddings of Cuntz algebras into 
matrices over Cuntz algebras allowed by K-theory.
\end{abstract}
\maketitle
\vspace*{-10mm}\tableofcontents 

\section{Introduction}
\noindent
Let $m$ and $n$ be  positive integers bigger than~$1$. Since $K_0(\mathcal O_n)=\mathbb Z/(n-1)\mathbb Z$ with $[1]=1$,
one sees immediately that a unital $*$-homomorphism $\mathcal O_m\to \mathcal O_n$ can exist only if 
$m-1 = k (n-1)$ for some  integer~$k$. We call them unital embeddings because the Cuntz algebras are simple \cite{c-j77}, so these
$*$-homomorphisms are automatically injective.
In~\cite{k-k09},  Kawamura constructs all such  $*$-homomorphisms
by assigning to generators monomials of generators. 

Furthermore, it turns out that tensoring the target Cuntz algebra with a matrix algebra yields more general unital embeddings.
More precisely, this time K-theory implies that a unital $*$-homomorphism $\mathcal O_p\to M_k(\mathcal O_q)$ can
exist only if
\begin{align}
(p-1)k = (q-1)s \;\text{for some}\; s\in\mathbb{N}\setminus\{0\}.
\end{align}
Note that, in this more general setting, the number of standard generators of the source Cuntz algebra no longer has to match
or exceed the number of standard generators of the target  Cuntz algebra.

On the other hand, Cuntz algebras are graph C*-algebras \cite{r-i05}, 
and a wide class $*$-homomorphisms between graph C*-algebras
was recently unraveled in \cite{ht22, ht24} as coming functorialy from morphisms of graphs. 
The main goal of this  paper is, using the aforementioned functoriality results, to construct all unital embeddings
$\mathcal O_p\to M_k(\mathcal O_q)$ by assigning to generators polynomials of generators. On the way
we show that the Kawamura formula can be easily derived from the covariant functoriality of
graph C*-algebras.

\section{Preliminaries}

\subsection{Directed graphs and finite paths}

\begin{definition}
A graph (directed graph, quiver) $E:=(E^0,E^1,s,t)$ 
is a quadruple consisting of the set of vertices $E^0$, the
set of edges (arrows) $E^1$, 
and the source and target (range) maps $s_E,\,t_E\colon E^1\to E^0$ assigning to each
edge its source and target vertex respectively. A graph is called finite iff both $E^0$ and $E^1$ are finite.
\end{definition}

Let $v$ be a vertex in a graph $E$. It is called a {\em sink} iff $s^{-1}_E(v)=\emptyset$,  it is called a {\em source} iff~$t^{-1}_E
(v)=\emptyset$, and it is called {\em regular} iff it is not a sink and $|s_E^{-1}(v)|<\infty$. The subset of regular vertices of a graph 
$E$ is denoted by ${\rm reg}(E)$. A {\em finite path} $p$ in $E$ is a vertex or a finite collection $e_1\ldots e_n$ of edges satisfying 
\begin{equation}
t_E(e_1)=s_E(e_2),\quad t_E(e_2)=s_E(e_3),\quad \ldots,\quad t_E(e_{n-1})=s_E(e_n).
\end{equation} 
Vertices are considered as finite paths of length~$0$. Every edge is a path of 
length~$1$. The {\em length} of a finite path that is not a vertex is the size of the tuple. 
A {\em loop} is a positive-length path that begins and  ends at the same vertex.

We denote the set of all finite-length paths in $E$ by~$FP(E)$. 
The beginning $s_E(p)$ of $p$ is $s_E(e_1)$ and the end $t_E(p)$ of $p$ is $t_E(e_n)$. The 
beginning and the end of a vertex is the vertex itself. We thus extend the source and target maps to maps
\begin{equation}
s_{PE},\,t_{PE}\colon FP(E)\longrightarrow E^0.
\end{equation}
 Finally, note that there exists a partial order on the set of finite paths:
\begin{equation}\label{popaths}
\alpha\preceq\beta\iff\exists\;\gamma\in FP(E)\colon \beta=\alpha\gamma.
\end{equation}

\subsection{Graph C*-algebras}

\begin{definition}[cf.\ \cite{ck80}]\label{graphc*}
Let $E$ be a graph. The {\em graph C*-algebra} $C^*(E)$ of $E$ is 
the universal  C*-algebra
generated by mutually orthogonal projections $P_v$, $v\in E^0$, and partial isometries $S_e$, $e\in E^1$, with mutually 
orthogonal ranges, satisfying
\begin{enumerate}
\vspace*{-3mm}\item $S^*_eS_e=P_{t_E(e)}$ for all $e\in E^1$,
\item $P_v=\sum_{e\in s^{-1}_E(v)}S_eS_e^*$ for all $v\in {\rm reg}(E)$,
\item $S_eS_e^*\leq P_{s_E(e)}$ for all $e\in E^1$.
\end{enumerate}
\end{definition}
\noindent
In what follows, we will need the notaion $S_p:=S_{e_1}S_{e_2}\ldots S_{e_n}$,
 where $p=e_1e_2\ldots e_n$ is a finite path in~$E$, and $S_p=P_v$ when $p=v$ is a vertex. 

\section{Functoriality}
\noindent
In this section, we briefly recall the contravariant and the covariant functors 
constructed in \cite{ht22} and \cite{ht24} respectively.

\subsection{Katsura's contravariant functoriality}
The standard category of graphs consists of graphs as objects equipped with morphisms in the
following sense:
\begin{definition}
A {\em homomorphism} from a graph $E:=(E^0,E^1,s_E,t_E)$ to a graph\linebreak $F:=(F^0,F^1,s_F,t_F)$
is a pair of maps 
\begin{equation*}
(f^0:E^0\to F^0,f^1:E^1\to F^1)
\end{equation*} 
satisfying the conditions:
\begin{equation*} 
s_F\circ f^1=f^0\circ s_E\,,\qquad t_F\circ f^1=f^0\circ t_E\,.
\end{equation*}
\end{definition}

The domain category of the contravariant functor assigning graph C*-algebras to graphs is obtained by restricting 
the graph homomorphisms as follows:
\begin{definition}[\cite{k-t06,ht22}]
We say that a graph homomorphism $(f_0,f_1)\colon E\to F$ is \emph{admissible} if it satisfies the three conditions:
\begin{enumerate}
\vspace*{-3mm}\item
Both $f_0$ and $f_1$ are proper maps (i.e.\ finite to one).
\item
$
\forall\; x\in F^1\colon (f^1)^{-1}(x)\ni e\longmapsto t_E(e) \in (f^0)^{-1}(t_F(x))\;\text{is bijective}
$ (target-bijectivity condition).
\item
$(f^0)^{-1}(\mathrm{reg}(F))\subseteq \mathrm{reg}(E)$ (regularity condition).
\end{enumerate}
\end{definition}

\begin{theorem}[\cite{k-t06,ht22}]
Directed graphs and admissible graph homomorphisms form a category, and
we have a contravariant functor from this category to the category 
of  C*-algebras and $*$-homomorphisms that assigns graph C*-algebras to graphs and  $*$-homomorphisms
to  admissible graph homomorphisms as follows. If
$ g : F \to E $ is an admissible graph homomorphism, then the formula  
\begin{align*}
g^* ( S_p )  := \sum_{q\in {g}^{-1}(p)} S_q,\quad p \in E^0 \cup E^1,
\end{align*}
defines a $*$-homomorphism $ g^* : C^*(E) \to C^*(F) $.
\end{theorem}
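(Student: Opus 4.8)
The plan is to verify directly that the proposed assignment $g^*(S_p) := \sum_{q \in g^{-1}(p)} S_q$, extended multiplicatively from generators, respects the three Cuntz--Krieger relations of Definition~\ref{graphc*}, and then invoke the universal property of $C^*(E)$ to conclude that it extends to a $*$-homomorphism; functoriality (identities and composition) will then be a routine bookkeeping check on fibers. First I would record the elementary combinatorial facts about an admissible homomorphism $g\colon F\to E$ that underpin everything: properness guarantees each sum $\sum_{q\in g^{-1}(p)}S_q$ is finite; the target-bijectivity condition says that for each edge $x\in F^1$ the fiber data match up so that $s_E$ and $t_E$ intertwine correctly with the fibers over $s_F(x)$ and $t_F(x)$; and the regularity condition ensures that whenever a vertex of $F$ is forced to satisfy relation~(2) in $C^*(F)$, its image in $E$ also satisfies~(2) in $C^*(E)$, so no relation is lost under pullback.

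Next I would check each relation in turn. Writing $Q_v := g^*(P_v) = \sum_{w\in (g^0)^{-1}(v)}P_w$ and $T_e := g^*(S_e) = \sum_{x\in (g^1)^{-1}(e)}S_x$, one first observes that the $Q_v$ are mutually orthogonal projections (disjoint fibers, orthogonal $P_w$) and the $T_e$ are partial isometries with mutually orthogonal ranges. For relation~(1), $T_e^*T_e = \sum_{x,x'}S_x^*S_{x'}$; orthogonality of ranges kills the off-diagonal terms, leaving $\sum_{x\in(g^1)^{-1}(e)}P_{t_F(x)}$, and the target-bijectivity condition identifies $\{t_F(x) : x\in(g^1)^{-1}(e)\}$ bijectively with $(g^0)^{-1}(t_E(e))$, giving $Q_{t_E(e)}$. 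Relation~(3) is the easiest: $T_eT_e^* = \sum_x S_xS_x^* \le \sum_x P_{s_F(x)} \le \sum_{w\in(g^0)^{-1}(s_E(e))}P_w = Q_{s_E(e)}$, using that $s_F(x)\in(g^0)^{-1}(s_E(e))$ by the homomorphism property. For relation~(2), fix $v\in\mathrm{reg}(E)$ and compute $\sum_{e\in s_E^{-1}(v)}T_eT_e^* = \sum_{e\in s_E^{-1}(v)}\sum_{x\in(g^1)^{-1}(e)}S_xS_x^*$; the inner double sum ranges exactly over the edges $x\in F^1$ with $g^1(x)\in s_E^{-1}(v)$, i.e.\ with $s_F(g^1(x))\in(g^0)^{-1}$-preimage constraints, and one checks this set is the disjoint union over $w\in(g^0)^{-1}(v)$ of $s_F^{-1}(w)$. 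Here the regularity condition enters: each such $w$ lies in $\mathrm{reg}(F)$, so relation~(2) holds in $C^*(F)$ at $w$, yielding $\sum_{x\in s_F^{-1}(w)}S_xS_x^* = P_w$, and summing over $w$ gives $Q_v$.

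With the three relations verified, the universal property of the graph C*-algebra $C^*(E)$ produces a unique $*$-homomorphism $g^*\colon C^*(E)\to C^*(F)$ sending $P_v\mapsto Q_v$, $S_e\mapsto T_e$; extending along paths gives the stated formula on all $S_p$, $p\in E^0\cup E^1$, and, by multiplicativity and the fact that $g^{-1}$ of a path is the set of paths mapping onto it, the same formula on $S_p$ for every $p\in FP(E)$. Finally, to see this is a contravariant functor one checks $(\id_E)^* = \id_{C^*(E)}$ (trivial, since fibers are singletons) and $(h\circ g)^* = g^*\circ h^*$ for composable admissible homomorphisms $g\colon F\to E$, $h\colon E\to D$, which amounts to the identity of fibers $(h\circ g)^{-1}(p) = \bigsqcup_{p'\in h^{-1}(p)} g^{-1}(p')$ together with checking that the composite of two admissible homomorphisms is admissible. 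I expect the main obstacle to be relation~(2): one must carefully track that the index set of the expanded sum is precisely a disjoint union of the sets $s_F^{-1}(w)$ over the fiber $(g^0)^{-1}(v)$ --- this is where admissibility, and in particular the regularity condition, is genuinely used, whereas relations~(1) and~(3) and the functoriality checks are largely formal.
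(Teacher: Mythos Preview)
Your proposal is correct. Note, however, that the paper does not supply its own proof of this theorem: it is quoted from \cite{k-t06,ht22} as background, so there is no in-paper argument to compare against. Your direct verification that the assignments $Q_v=\sum_{w\in(g^0)^{-1}(v)}P_w$ and $T_e=\sum_{x\in(g^1)^{-1}(e)}S_x$ satisfy the Cuntz--Krieger relations, followed by an appeal to universality and the fiber identity $(h\circ g)^{-1}(p)=\bigsqcup_{p'\in h^{-1}(p)}g^{-1}(p')$ for functoriality, is exactly the standard route taken in the cited references. One small cosmetic point: in your check of relation~(3) the intermediate bound $\sum_x P_{s_F(x)}$ need not be a projection (sources may repeat over the fiber), but your conclusion is unaffected since each $S_xS_x^*\le Q_{s_E(e)}$ and the $S_xS_x^*$ are mutually orthogonal, so their sum is still $\le Q_{s_E(e)}$.
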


\subsection{The covariant functoriality}
The domain of the covariant functor needs the concept of an admissible  path homomorphism of graphs.
\begin{definition}[\cite{ht24}]
Let $E$ and $F$ be graphs.
A {\em path homomorphism} from $E$ to $F$  is a map \mbox{$f\colon FP(E)\to FP(F)$} satisfying:
\begin{enumerate}
\vspace*{-3mm}\item
$f(E^0)\subseteq F^0$,
\item
$s_{PF}\circ f=f\circ s_{PE}\,,\quad t_{PF}\circ f=f\circ t_{PE}\,$,
\item
$\forall\;p,q\in FP(E)\text{ such that } t_{PE}(p)=s_{PE}(q)\colon f(pq)=f(p)f(q)$.
\end{enumerate}
\end{definition}
\noindent If $(f^0,f^1)\colon E\to F$ is a homomorphism of graphs, then $f\colon FP(E)\to FP(F)$ defined by
\begin{gather*}
\forall\; v\in E^0\colon f(v):=f^0(v),\quad
\forall\; e\in E^1\colon f(e):=f^1(e),\\
\forall\; e_1\ldots e_n\in FP(E)\colon f(e_1\ldots e_n):= f^1(e_1)\ldots f^1(e_n)\in FP(F),
\end{gather*}
is a path homomorphism of graphs. The thus induced path homomorphism of graphs preserves the length of
paths.  Vice versa, if a path homomorphism of graphs preserves the length of
paths, then it is induced by a homomorphism of graphs. 
\begin{definition}[\cite{ht24}]
We call a path homomorphism of graphs $f\colon E\to F$ \emph{admissible} when the following three conditions are satisfied:
\begin{enumerate}
\vspace*{-3mm}\item
The map $f\colon FP(E)\to FP(F)$ is injective when restricted to $E^0$ (vertiex-injectivity condition).
\item
If $e$ and $e'$ are edges in $E$, then
$
f(e)\preceq f(e')\;\Longrightarrow\; e=e'
$
(monotonicity condition). 
\item
The map $f\colon FP(E)\to FP(F)$ is regular in the sense of Definition~\ref{regularity} (regularity condition).
\end{enumerate}
\end{definition}

\begin{definition}[\cite{ht24}]\label{regularity}
Let $f\colon FP(E)\to FP(F)$ be apath homomorphism of graphs, and let 
\begin{equation*}
{\rm reg}_0(E):=\{v\in{\rm reg}(E)~|~s_E^{-1}(v)=\{e\}\text{ and } t_E(e)=v\}
\end{equation*} 
denote the set of {\em 0-regular vertices}. We call $f$ \emph{regular} whenever the following conditions hold:
\vspace*{-2mm}\begin{enumerate}
\item
For any $v\in\mathrm{reg}(E)\setminus {\rm reg}_0(E)$, we require that:
\begin{enumerate}
\item
$f$ restricted to $s_E^{-1}(v)$ be injective;
\item
$p\in f(s_E^{-1}(v))$ if and only if
\begin{enumerate}
\item
$\exists\;n\in\mathbb{N}\setminus\{0\}\colon p=e_1\ldots e_n,\, e_i\in E^1 \text{ for all } i$,
\item
$pq\in f(s_E^{-1}(v))\,\Rightarrow\, q=t_{PF}(p)$,
\item
$\forall\; i\in\{1,\ldots,n\},\;e\in s_F^{-1}(s_F(e_i))\;\exists\;r\in \mathrm{FP}(F)\colon
e_1\ldots e_{i-1}er\in f(s_E^{-1}(v))$.
\end{enumerate}
\end{enumerate}
\item 
For any $v\in {\rm reg}_0(E)$, either the above condition holds or $f(s^{-1}_E(v))=f(v)$.
\end{enumerate}
\end{definition}

\begin{theorem}[\cite{ht24}]
Directed graphs and admissible path homomorphisms of graphs form a category, and
we have a covariant functor from this category to the category 
of  C*-algebras and $*$-homomorphisms that assigns graph C*-algebras to graphs and
$*$-homomorphisms to  admissible path homomorphisms as follows. If
$ f : E \to F $ is an admissible path homomorphism of graphs,   then the formula
\begin{align*}
f_* ( S_p )  = S_{ f(p)}, \quad p \in E^0 \cup E^1,
\end{align*}
defines a $*$-homomorphism
$ f_* : C^*(E) \to C^*(F) $.
\end{theorem}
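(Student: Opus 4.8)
The plan is to exploit the universal property defining $C^*(E)$ in Definition~\ref{graphc*}: to construct $f_*$ it suffices to exhibit in $C^*(F)$ a family of mutually orthogonal projections $Q_v:=P_{f(v)}$, $v\in E^0$ (legitimate because $f(E^0)\subseteq F^0$), and partial isometries $T_e:=S_{f(e)}$, $e\in E^1$, with mutually orthogonal ranges, satisfying the three Cuntz--Krieger relations written for the graph $E$; the universal property then returns a unique $*$-homomorphism $f_*\colon C^*(E)\to C^*(F)$ with $f_*(P_v)=Q_v$ and $f_*(S_e)=T_e$. Afterwards one checks functoriality and the category axioms.

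First I would record the elementary path calculus in $C^*(F)$: for a finite path $p$ in $F$ the element $S_p$ is a partial isometry with $S_p^*S_p=P_{t_F(p)}$ and $S_pS_p^*\le P_{s_F(p)}$, and for finite paths $p,q$ one has $S_p^*S_q=0$ unless $p\preceq q$ or $q\preceq p$, the last fact proved by peeling off the longest common initial subpath and using that distinct edges have orthogonal ranges. From this: mutual orthogonality of the $Q_v$ follows from vertex-injectivity of $f$ and orthogonality of distinct vertex projections in $C^*(F)$; mutual orthogonality of the ranges of the $T_e$ follows from the monotonicity condition, since for $e\ne e'$ neither $f(e)\preceq f(e')$ nor $f(e')\preceq f(e)$, whence $T_e^*T_{e'}=0$. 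Relation~(1) for the family $\{Q_v,T_e\}$ reads $T_e^*T_e=P_{t_F(f(e))}=P_{f(t_E(e))}=Q_{t_E(e)}$ and relation~(3) reads $T_eT_e^*=S_{f(e)}S_{f(e)}^*\le P_{s_F(f(e))}=P_{f(s_E(e))}=Q_{s_E(e)}$, both using the intertwining of source and target maps built into the definition of a path homomorphism; these are immediate.

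The crux is relation~(2): for every $v\in\mathrm{reg}(E)$ one must prove $P_{f(v)}=\sum_{e\in s_E^{-1}(v)}S_{f(e)}S_{f(e)}^*$. I would isolate a \emph{telescoping lemma}: if $W\subseteq FP(F)$ is a finite nonempty set of positive-length paths all with source $w$ such that ($W$ is an antichain for $\preceq$) no element of $W$ is $\preceq$ another distinct one, and (exhaustiveness) for every $e_1\ldots e_n\in W$, every $i\le n$ and every $e\in s_F^{-1}(s_F(e_i))$ there is $r$ with $e_1\ldots e_{i-1}er\in W$, then $w\in\mathrm{reg}(F)$ and $\sum_{p\in W}S_pS_p^*=P_w$. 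This is proved by strong induction on $\max_{p\in W}|p|$: exhaustiveness forces $s_F^{-1}(w)$ to be nonempty and finite, so $w$ is regular and relation~(2) of $C^*(F)$ is available at $w$; each $e\in s_F^{-1}(w)$ with $e\notin W$ has $W_e:=\{p':ep'\in W,\ |p'|\ge1\}$ nonempty and again antichain-and-exhaustive over $t_F(e)$ with strictly smaller maximal length, so by induction $\sum_{p'\in W_e}S_{p'}S_{p'}^*=P_{t_F(e)}$ and therefore $\sum_{p'\in W_e}S_{ep'}S_{ep'}^*=S_eP_{t_F(e)}S_e^*=S_eS_e^*$; regrouping the sum over $W$ by first edge then gives $\sum_{p\in W}S_pS_p^*=\sum_{e\in s_F^{-1}(w)}S_eS_e^*=P_w$. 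Now the regularity condition of Definition~\ref{regularity} is precisely tailored so that, for $v\in\mathrm{reg}(E)\setminus\mathrm{reg}_0(E)$, the map $f$ is injective on $s_E^{-1}(v)$ and $W:=f(s_E^{-1}(v))$ satisfies the antichain hypothesis (by monotonicity together with condition~(b)(ii) of the membership criterion) and the exhaustiveness hypothesis (condition~(b)(iii)), with common source $w=f(v)$; the lemma then yields relation~(2). For $v\in\mathrm{reg}_0(E)$, where $s_E^{-1}(v)=\{e\}$ and relation~(2) in $E$ reads $P_v=S_eS_e^*$, the dichotomy in Definition~\ref{regularity}(2) gives either the same situation with $W=\{f(e)\}$, or $f(e)=f(v)$, in which case $S_{f(e)}S_{f(e)}^*=P_{f(v)}P_{f(v)}=P_{f(v)}$ directly.

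With the three relations verified, the universal property produces the $*$-homomorphism $f_*$, and multiplicativity together with condition~(3) of a path homomorphism upgrades $f_*(S_e)=S_{f(e)}$ to $f_*(S_p)=S_{f(p)}$ for every $p\in FP(E)$. For the categorical statement: the identity path homomorphism of $E$ is admissible and induces $\mathrm{id}_{C^*(E)}$; for admissible $f\colon E\to F$ and $g\colon F\to G$, the composite $g\circ f\colon FP(E)\to FP(G)$ is again a path homomorphism, and it is admissible --- vertex-injectivity is clear, monotonicity follows by comparing the paths $g(f(e))$ and $g(f(e'))$ letter by letter and applying monotonicity of $g$ and then of $f$, and regularity of the composite is obtained by transporting the defining conditions of Definition~\ref{regularity} through $g$ --- while $(g\circ f)_*=g_*\circ f_*$ because both send each generator $S_p$ to $S_{g(f(p))}$, and associativity of composition is inherited from that of maps. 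I expect the only genuinely delicate bookkeeping to lie in the regularity condition --- both in matching $f(s_E^{-1}(v))$ to the hypotheses of the telescoping lemma and in checking that admissibility survives composition --- with everything else being the standard C*-algebraic path calculus.
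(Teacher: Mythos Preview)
The paper does not prove this theorem at all; it is quoted verbatim from \cite{ht24} as background in the preliminaries on covariant functoriality, with no accompanying argument. There is therefore no in-paper proof to compare your proposal against.

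For what it is worth, your sketch is a sound and well-organized outline of how such a proof goes. The telescoping lemma you isolate is exactly the right device for verifying the Cuntz--Krieger summation relation~(2), and your induction on $\max_{p\in W}|p|$ is correct; the remaining relations and the orthogonality claims follow from the path calculus as you indicate. Your own caveat is accurate: the part you only gesture at --- that admissibility (in particular the regularity clause of Definition~\ref{regularity}, with its $\mathrm{reg}_0$ exception) is preserved under composition --- is where a complete proof requires the most careful case analysis, and ``transporting the defining conditions through $g$'' would need to be spelled out. But as a plan it is entirely reasonable and is in the spirit of what one would expect the cited reference to contain.
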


\section{Unital embeddings of Cuntz algebras}
\subsection{Kawamura's embeddings}\label{kawa}

\begin{definition}[\cite{c-j77}]
Let $n\in\mathbb{N}$, $n>1$. The Cuntz algebra
$\mathcal O_n$ is the universal C*-algebra generated by the isometries $S_i$,
$i\in\{1,\ldots,n\}$, satisfying $\sum_{ i =1}^n S_i S_i^* = 1$.
\end{definition}
\noindent
One can easily see that $\mathcal O_n$  is the graph $C^*$-algebra of the graph
\begin{equation}
\label{eq:E_n}
E_n:=\quad
\begin{tikzpicture}[auto,swap]
\tikzstyle{vertex}=[circle,fill=black,minimum size=3pt,inner sep=0pt]
\tikzstyle{edge}=[draw,->]
\tikzstyle{cycle1}=[draw,->,out=130, in=50, loop, distance=40pt]
\tikzstyle{cycle2}=[draw,->,out=130, in=50, loop, distance=70pt]
   
\node[vertex] (0) at (0,0) {};
\node (1) at (0,1.25) {\vdots};

\path (0) edge[cycle1] node {$e_1$} (0);
\path (0) edge[cycle2] node[above] {$e_n$} (0);

\end{tikzpicture}.
\end{equation}
It is equally straightforward to note that, when considering a path homomorphism $h\colon E_m\to E_n$, where
$m-1 = (n-1)k$ for some $k\in\mathbb{N}\setminus\{0\}$, the regularity condition of Definition~\ref{regularity}
dictates the Kawamura formula for the unital embedding  $\mathcal O_m\to\mathcal O_n$.

Indeed, let us  first  label the edges of $E_m$ and $E_n$ by  $\set{e_i}_{ i=1}^m$ and 
$ \{e'_j\}_{ j=1}^n$, and the unique vertices of $E_m$ and $E_n$ by $v$ and  $v'$, respectively.
Of course, $h(v) := v'$. For the edges, 
according to the congruence relation, the first $m-1$ edges of  $E_m$
can be evenly divided into $k$-batches containing  $n-1$ edges.
For the first batch, we set 
\begin{align}
 h (e_j) := e'_j, \, \, j=1, \ldots , n-1. 
\end{align}
For the $l$-th batch,  $l=2, \ldots ,k$, 
\begin{align}
 h (e_{ j + l(n-1)}) := e_n^{l-1} e'_{ j}, \, \, j=1, \ldots , n-1.  
\end{align}
Finally, for the last edge, we set $ h (e_m) := e_n^k$. Now one can readily verify that $h\colon E_m\to E_n$ is an admissible path 
homomorphism and that $h_*\colon\mathcal O_m\to\mathcal O_n$ coincides with the $*$-homomorphism of Kawamura.

\subsection{Matrix algebras over Cuntz algebras as graph C*-algebras} 
We will present two families of graphs whose graph C*-algebras are 
matrix algebras over Cuntz algebras.
Recall first that the matrix algebra  $M_k (\mathbb{C})$ is the  graph  $C^*$-algebra
of the $k$-line graph  
\begin{align}
 \begin{tikzcd}[ampersand replacement=\&]
	{v_1} \& {v_2} \& \cdots \& {v_{k-1}} \& {v_k}
	\arrow[ "{l_1}"', from=1-1, to=1-2]
	\arrow["{l_{k-1}}"', from=1-4, to=1-5]
	\arrow[from=1-2, to=1-3]
	\arrow[from=1-3, to=1-4]
\end{tikzcd}
.
\label{eq:k-line}
\end{align}
Now we can view $M_k ( \mathcal O_m )$ as the universal C*-algebra generated by
\begin{align}
\set{ S_{ l_j } \otimes E_i \;|\; 1 \le j \le k-1 ,\, 1\le i \le m }
\end{align}
subject to appropriate relations. 
On the other hand,  by blowing up the vertex of 
$E_m$ in \eqref{eq:E_n} to the $k$-line graph 
\eqref{eq:k-line}, we obtain:
\begin{equation}
\label{eq:Gmk}
\begin{tikzpicture}[auto,swap,  scale=1.75]
\tikzstyle{vertex}=[circle,fill=black,minimum size=3pt,inner sep=0pt]
\tikzstyle{edge}=[draw,-{Stealth}]
\tikzstyle{cycle1}=[draw,-{Stealth},out=130, in=50, loop, distance=40pt]
\tikzstyle{cycle2}=[draw,-{Stealth},out=100, in=30, loop, distance=40pt]
   
\node[vertex, label=below:$v_1$] (0) at (0,0) {};
\node[vertex, label=below:$v_2$] (1) at (1,0) {};
\node[vertex] (2) at (2,0) {};
\node[vertex] (3) at (3,0) {};
\node[vertex, label=below:$v_k$] (4) at (4,0) {};
\node  at (-1,0) {$ G_{m,k}:=$};

\path (0) edge[edge] node[above, scale=0.7]{} (1) ;
\path (1) edge[edge] node[above, scale=0.7]{} (2);
\path (2) edge[dashed] (3);
\path (3) edge[edge]  node[above, scale=0.7]{} (4);
\path (4) edge[edge, bend left=40 ,red]
node[above, scale=0.7] {$\set{e_1 , \ldots , e_m}$}(0);
\end{tikzpicture}
\end{equation}

By direct verification, we prove that
\begin{proposition}\label{cmat}
The formulas
\begin{align*}
\varphi ( \bar S_{ l_j }) = S_{ l_i } \otimes 1 ,\; 1\leq j\leq k-1,\quad
\varphi ( \bar S_{ e_i }) = S_p \otimes  E_i,\;  1\leq i\leq m,\;p:= l_1 \ldots l_{ k-1}. 
\end{align*}
define a $*$-isomorphism
$ \varphi : C^*(G_{m,k}) \to M_k ( \mathcal O_m )$.
\end{proposition}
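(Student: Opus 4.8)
The plan is to check directly that the proposed formulas define a $*$-homomorphism and then exhibit an inverse. First I would verify well-definedness: by the universal property of $C^*(G_{m,k})$, it suffices to show that the target elements $S_{l_j}\otimes 1$ (for $1\le j\le k-1$) and $S_p\otimes E_i$ (for $1\le i\le m$), where $p=l_1\ldots l_{k-1}$, satisfy the defining relations of $C^*(G_{m,k})$ with $P_{v_j}$ sent to $1\otimes E_{jj}$ (the matrix unit at the $j$-th diagonal slot), i.e.\ to $\varphi(\bar P_{v_j})$. Concretely one computes $(S_{l_j}\otimes 1)^*(S_{l_j}\otimes 1)=P_{v_{j+1}}\otimes 1$ inside $M_k(\mathcal O_m)$, which matches $\varphi(\bar P_{t(l_j)})=\varphi(\bar P_{v_{j+1}})$; that $(S_p\otimes E_i)^*(S_p\otimes E_i)=P_{v_1}\otimes E_i^*E_i$; and that the two Cuntz--Krieger relations at the regular vertices $v_1,\ldots,v_{k-1}$ (which emit a single edge) and at $v_k$ (which emits the $m$ loops $e_1,\ldots,e_m$) hold. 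The relation at $v_k$ reads $\sum_{i=1}^m (S_p\otimes E_i)(S_p\otimes E_i)^* = (S_pS_p^*)\otimes \sum_i E_iE_i^* = (S_pS_p^*)\otimes 1$, and one notes $S_pS_p^*=S_{l_1}\cdots S_{l_{k-1}}S_{l_{k-1}}^*\cdots S_{l_1}^*$ equals the projection onto $v_k$ in $M_k(\mathbb C)$, matching $\varphi(\bar P_{v_k})=1\otimes E_{kk}$. The mutual orthogonality of ranges of the $\bar S_{e_i}$ follows from $E_iE_j^*=0$ for $i\ne j$, and orthogonality of the $\bar S_{l_j}$-ranges from the corresponding fact for the $k$-line graph.

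Next I would construct the inverse $\psi\colon M_k(\mathcal O_m)\to C^*(G_{m,k})$. Using the presentation of $M_k(\mathcal O_m)$ recalled in the excerpt (generated by $S_{l_j}\otimes E_i$ subject to the evident relations), or more simply by writing $M_k(\mathcal O_m)\cong M_k(\mathbb C)\otimes\mathcal O_m$ and using that $M_k(\mathbb C)\cong C^*(k\text{-line graph})$, I would send the matrix units $E_{ab}$ to the obvious products of the $\bar S_{l_j}$ and their adjoints in $C^*(G_{m,k})$, and send the Cuntz generator $S_i\otimes 1\in M_k(\mathcal O_m)$ (more precisely, $1\otimes S_i$) to an element built from $\bar S_{e_i}$ together with the $\bar S_{l_j}$'s so as to "wrap around" the $k$-line. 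For instance one can set $\psi(E_{11}\otimes S_i):=\bar S_{e_i}\bar S_{l_1}\cdots\bar S_{l_{k-1}}$ (an isometry from $\bar P_{v_1}$ to $\bar P_{v_1}$ inside the corner $\bar P_{v_1}C^*(G_{m,k})\bar P_{v_1}$), extend to all matrix entries via the $\bar S_{l_j}$, check the Cuntz relation $\sum_i(\cdots)(\cdots)^*=\bar P_{v_1}$ from Cuntz--Krieger at $v_k$ pulled back through $\bar S_{l_1}\cdots\bar S_{l_{k-1}}$, and verify the relations. Then a direct computation on generators shows $\psi\circ\varphi=\id$ and $\varphi\circ\psi=\id$, so $\varphi$ is a $*$-isomorphism.

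An alternative, possibly cleaner route is to observe that $C^*(G_{m,k})$ is a full corner: the projection $\bar P_{v_k}$ is full (every vertex connects to $v_k$ and $v_k$ connects to everything), so $C^*(G_{m,k})\cong \bar P_{v_k}C^*(G_{m,k})\bar P_{v_k}\otimes M_k(\mathbb C)$ by a standard argument, and the corner $\bar P_{v_k}C^*(G_{m,k})\bar P_{v_k}$ is generated by the elements $\bar S_{l_{k-1}}^*\cdots\bar S_{l_1}^*\bar S_{e_i}\bar S_{l_1}\cdots\bar S_{l_{k-1}}$, which are $m$ isometries with $\sum_i(\cdot)(\cdot)^*=\bar P_{v_k}$, hence the corner is $\mathcal O_m$. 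Tracking the isomorphism through this identification reproduces exactly the formulas for $\varphi$. I would probably present the first, hands-on approach since the proposition already advertises "direct verification," relegating the corner picture to a remark.

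The main obstacle is purely bookkeeping: making the matrix-unit conventions in $M_k(\mathcal O_m)=M_k(\mathbb C)\otimes\mathcal O_m$ consistent with the orientation of the $k$-line graph (is $l_j$ the "$E_{j,j+1}$" or the "$E_{j+1,j}$" matrix unit?), and correspondingly getting the placement of $p=l_1\ldots l_{k-1}$ versus its reverse right in both $\varphi$ and $\psi$, so that the composites really are the identity rather than a conjugate. There is no conceptual difficulty once the conventions are pinned down; every relation to be checked is an immediate consequence of $E_iE_j^*=\delta_{ij}E_{11}$-type identities and the Cuntz--Krieger relations at the vertices of $G_{m,k}$.
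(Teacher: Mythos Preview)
Your well-definedness check is essentially the paper's own (modulo the tensor-order slip you already flag: with the paper's convention $M_k(\mathcal O_m)=M_k(\mathbb C)\otimes\mathcal O_m$, one has $\varphi(\bar P_{v_j})=P_{v_j}\otimes 1$, not $1\otimes E_{jj}$; also the $e_i$ are edges $v_k\to v_1$, not loops). Where you diverge is in establishing bijectivity. The paper does not build an inverse. Instead, once $\varphi$ is shown to be a $*$-homomorphism, surjectivity is immediate because the image visibly contains a generating set of $M_k(\mathcal O_m)$, and injectivity follows from the fact that $C^*(G_{m,k})$ is \emph{simple} (the graph $G_{m,k}$ is cofinal and, for $m\ge 2$, every cycle has an exit). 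This sidesteps entirely the inverse construction and the orientation bookkeeping you anticipate.

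Your explicit-inverse route is correct in outline and would also work, but it is more laborious and, as your own hesitation shows, invites slips: for instance $\bar S_{e_i}\bar S_{l_1}\cdots\bar S_{l_{k-1}}=\bar S_{e_ip}$ lies in the corner $\bar P_{v_k}C^*(G_{m,k})\bar P_{v_k}$, not the $\bar P_{v_1}$-corner (to land in the latter you want $\bar S_p\bar S_{e_i}$). Your full-corner alternative is likewise sound, but note that fullness of $\bar P_{v_k}$ alone only gives Morita equivalence; the isomorphism with $pAp\otimes M_k(\mathbb C)$ requires the explicit system of matrix units supplied by the partial isometries $\bar S_{l_j}\cdots\bar S_{l_{k-1}}$. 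The payoff of your approach is an explicit formula for $\varphi^{-1}$; the payoff of the paper's is a two-line argument once simplicity is invoked.
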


Next,
for integers $m,n$ bigger than $1$ and such that $m - 1 = (n-1)k$ for some  $k \ge 1$,
we consider the following graph, where each red arrow represents $n-1$-edges:  
\begin{equation}
\label{eq:Fmn}
\hspace*{4mm}\begin{tikzpicture}[auto,swap, scale=1.75]
\tikzstyle{vertex}=[circle,fill=black,minimum size=3pt,inner sep=0pt]
\tikzstyle{edge}=[draw,-{Stealth}]
\tikzstyle{cycle1}=[draw,-{Stealth},out=130, in=50, loop, distance=40pt]
\tikzstyle{cycle2}=[draw,-{Stealth},out=100, in=30, loop, distance=40pt]
   
\node[vertex, label=below:$v_1$] (0) at (0,0) {};
\node[vertex, label=below:$v_2$] (1) at (1,0) {};
\node[vertex] (2) at (2,0) {};
\node[vertex] (3) at (3,0) {};
\node[vertex, label=below:$v_k$] (4) at (4,0) {};
\node  at (-1,0) {$ F_{m,n}:=$};

\path (0) edge[edge] (1);
\path (1) edge[edge] (2);
\path (2) edge[dashed] (3);
\path (3) edge[edge] (4);
\path (4) edge[edge, bend left=50, red] (3);
\path (4) edge[edge, bend left=50, red] (2);
\path (4) edge[edge, bend left=50, red] (1);
\path (4) edge[edge, bend left=50, red] (0);
\path (4) edge[edge, bend right=40] node[above, scale=0.7] {$e_m$}(0);
\path (4) edge[cycle1, red] node[above, scale=0.5] {$n-1$} (4);
\end{tikzpicture}.
\end{equation}
In more detail, it is obtained by adding $m$ edges to the  $k$-line graph 
\eqref{eq:k-line}, with  $v_k$ as the source, in the following way:
the edge $e_m$ goes to $v_1$, and the remaining $m-1 = k (n -1 )$ edges
are divided evenly  into $k$ batches of $(n -1 )$ with each batch ending in a different vertex. 

Now, it is straghtforward to prove:
\begin{lemma}\label{aux}
Let  $m,n$ be integers bigger than $1$ and such that $m - 1 = (n-1)k$ for some  $k \ge 1$. Then the formulas
\begin{gather*}
f(\bar l_j):= l_j,\;1\leq j\leq k-1,\quad f(\bar e_{i}):=e_i,\, (n-1)(k-1)<i\leq m,\\
 f(\bar e_{i}):=e_i l_1\ldots l_j,\; 1\leq j\leq k-1,\, i=(n-1)(k-1-j) +r,\, 0< r\leq n-1,
 \end{gather*}
define an admissible path homomorphism $f\colon F_{m,n}\to G_{m,k}$ inducing a $*$-isomorphism 
\[
f_*\colon C^*(F_{m,n})\longrightarrow  C^*(G_{m,k}).
\]
\end{lemma}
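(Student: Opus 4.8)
The plan is to verify directly that $f$ is a well-defined admissible path homomorphism, apply the covariant functoriality theorem of \cite{ht24} to obtain the induced $*$-homomorphism $f_*$, and then show that $f_*$ is bijective by proving surjectivity explicitly and injectivity through simplicity of $C^*(F_{m,n})$.

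First I would fix notation. In $G_{m,k}$ the vertices are $v_1,\dots,v_k$, the line edges are $l_j\colon v_j\to v_{j+1}$ ($1\le j\le k-1$), and the edges $e_1,\dots,e_m$ all run from $v_k$ to $v_1$. In $F_{m,n}$ the vertices are again $v_1,\dots,v_k$, the line edges are $\bar l_j\colon v_j\to v_{j+1}$, and the edges $\bar e_1,\dots,\bar e_m$ all emanate from $v_k$, with $t_{F_{m,n}}(\bar e_i)=v_1$ for $i\in L:=\{(n-1)(k-1)+1,\dots,m\}$ and $t_{F_{m,n}}(\bar e_i)=v_{j+1}$ when $i$ lies in the batch $B_j:=\{(n-1)(k-1-j)+1,\dots,(n-1)(k-j)\}$, $1\le j\le k-1$; the sets $B_1,\dots,B_{k-1},L$ partition $\{1,\dots,m\}$. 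With these conventions the displayed formulas manifestly respect sources and targets, so, extending them multiplicatively, $f$ is a genuine path homomorphism fixing every vertex. Vertex-injectivity is then immediate; monotonicity reduces to a short case check, since $f(\bar l_j)=l_j$ and each $f(\bar e_i)$ equals $e_i$ or $e_i l_1\cdots l_j$ and hence begins with a different edge of $G_{m,k}$ for different source edges, so that $f(\bar x)\preceq f(\bar x')$ forces $\bar x=\bar x'$.

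The crux is the regularity condition of Definition~\ref{regularity}. I would first record that $\mathrm{reg}(F_{m,n})=\{v_1,\dots,v_k\}$ and $\mathrm{reg}_0(F_{m,n})=\emptyset$, so only condition~(1) has to be checked, at each vertex in turn. For $j\le k-1$ we have $s_{F_{m,n}}^{-1}(v_j)=\{\bar l_j\}$ and $f\big(s_{F_{m,n}}^{-1}(v_j)\big)=\{l_j\}$; since $v_j$ emits only $l_j$ in $G_{m,k}$, conditions (i)--(iii) force a candidate path to be exactly $l_j$ (anything longer fails (iii) at its second slot for length reasons). The real work is at $v_k$, where $s_{F_{m,n}}^{-1}(v_k)=\{\bar e_1,\dots,\bar e_m\}$ and
\[
f\big(s_{F_{m,n}}^{-1}(v_k)\big)=\{\,e_i \mid i\in L\,\}\cup\bigcup_{j=1}^{k-1}\{\,e_i l_1\cdots l_j \mid i\in B_j\,\};
\]
injectivity of $f$ on $s_{F_{m,n}}^{-1}(v_k)$ is clear since these images have pairwise distinct leading edges. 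For the ``if and only if'' in (1)(b) the argument I have in mind is: any path in $G_{m,k}$ issuing from $v_k$ is forced, vertex after vertex, to alternate a single $e$-edge with a maximal initial run $l_1 l_2\cdots$ along the line, so it reads $e_{i_1}l_1\cdots l_{a_1}e_{i_2}l_1\cdots$; condition~(iii) at the first slot requires every edge of $G_{m,k}$ sharing the source of the first edge to occur as a leading edge of some element of $f(s_{F_{m,n}}^{-1}(v_k))$, and $v_k$ is the only vertex with that property, so the path begins with some $e_i$; condition~(ii) then prevents the path from being a proper prefix $e_i l_1\cdots l_t$, $t<j$, of $e_i l_1\cdots l_j$ in the case $i\in B_j$; and condition~(iii), applied right after the block $e_i l_1\cdots l_j$ (with the convention that this block is just $e_i$ when $i\in L$), prevents the path from being a proper extension of it; hence the path coincides with one of the listed ones, and conversely every listed path satisfies (i)--(iii) by the same bookkeeping. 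Granting admissibility, the covariant functoriality theorem yields a $*$-homomorphism $f_*\colon C^*(F_{m,n})\to C^*(G_{m,k})$ with $f_*(\bar S_p)=S_{f(p)}$.

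It remains to prove $f_*$ is bijective. For surjectivity, $f_*(\bar S_{v_j})=S_{v_j}$, $f_*(\bar S_{\bar l_j})=S_{l_j}$, and $f_*(\bar S_{\bar e_i})=S_{e_i}$ for $i\in L$; for $i\in B_j$ we get $f_*(\bar S_{\bar e_i})=S_{e_i}S_{l_1}\cdots S_{l_j}$, and since $v_1,\dots,v_j$ are each regular with a single outgoing edge, $S_{l_1}\cdots S_{l_j}S_{l_j}^*\cdots S_{l_1}^*=P_{v_1}$, so $S_{e_i}=S_{e_i}P_{v_1}=f_*(\bar S_{\bar e_i})\,(S_{l_1}\cdots S_{l_j})^*$ lies in the (automatically closed, self-adjoint) image of $f_*$; thus every generator of $C^*(G_{m,k})$ is hit. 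For injectivity, $F_{m,n}$ is cofinal (every vertex reaches $v_k$, and every infinite path meets $v_k$) and every cycle in it has an exit (every cycle runs through $v_k$, which emits $m\ge 2$ edges), hence $C^*(F_{m,n})$ is simple by the standard criterion \cite{r-i05}; since $f_*$ is unital it is nonzero, and therefore injective. Putting these together, $f_*$ is a $*$-isomorphism. The step I expect to be the main obstacle is the regularity check at $v_k$ --- precisely the ``only if'' direction of condition~(1)(b) --- which forces one to pin down the exact length and $l$-tail of those paths of $G_{m,k}$ that satisfy (i)--(iii); everything else is routine bookkeeping.
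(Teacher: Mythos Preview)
Your proposal is correct and carries out in full the direct verification that the paper leaves implicit (the paper simply asserts the lemma is ``straightforward to prove'' and gives no argument). Your check of admissibility---in particular the regularity analysis at $v_k$ via conditions (i)--(iii)---and your bijectivity argument (surjectivity by recovering each $S_{e_i}$ as $f_*(\bar S_{\bar e_i})(S_{l_1}\cdots S_{l_j})^*$, injectivity via simplicity of $C^*(F_{m,n})$) are exactly the natural route and are sound.
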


\subsection{The unital embeddings $ \mathcal O_p \to M_k ( \mathcal O_q )$}
We begin by proving our key lemma which manifests the pivotal role played by the graph $F_{m,n}$.
\begin{lemma}\label{pivot}
Let  $m,n$ be integers bigger than $1$ and such that $m - 1 = (n-1)k$ for some  $k \ge 1$. 
Then the graph $F_{m,n}$ has $k$ vertices and $nk$  edges with $n$ edges ending at each vertex, and
 mapping all vertices to one vertex and $k$ edges each ending at a different vertex to one loop
defines an admissible graph homomorphism $g\colon F_{m,n}\to E_n $ inducing a unital injective $*$-homomorphism 
\[
g^*\colon  C^*(E_n)\longrightarrow  C^*(F_{m,n}).
\]
\end{lemma}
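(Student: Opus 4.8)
The plan is to construct the graph homomorphism $g = (g^0, g^1) \colon F_{m,n} \to E_n$ explicitly, verify the three admissibility conditions of Katsura's contravariant functoriality (properness, target-bijectivity, regularity), and then identify the induced $*$-homomorphism $g^*\colon C^*(E_n) \to C^*(F_{m,n})$, checking that it is unital and injective. First I would record the combinatorial bookkeeping: $F_{m,n}$ has exactly the $k$ vertices $v_1,\ldots,v_k$ of the $k$-line graph, and its edges are the $k-1$ line edges $\bar l_1,\ldots,\bar l_{k-1}$ together with the $m = (n-1)k + 1$ edges emanating from $v_k$, namely $\bar e_m$ going to $v_1$ and the $k$ red batches of $n-1$ edges ending at $v_1, v_2, \ldots, v_k$ respectively; the red batch ending at $v_k$ is the red loop drawn at $v_k$. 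So the edge count is $(k-1) + (n-1)k + 1 = nk$. For the count of edges ending at a given vertex $v_j$ with $1 \le j \le k-1$: one line edge $\bar l_j$ (from $v_j$ to $v_{j+1}$, wait — reindex carefully) and one red batch of $n-1$ edges, giving $n$; at $v_1$ additionally the edge $\bar e_m$ must be accounted for, so the line edge into $v_1$ does not exist and instead $\bar e_m$ plays that role — in either case the total is $n$ edges ending at each vertex. This "$n$ edges in at each vertex" is the structural fact that makes a contravariant map to $E_n$ possible.

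Next I would define $g$. On vertices, $g^0(v_j) := v'$ for all $j$, where $v'$ is the unique vertex of $E_n$. On edges, I would send the $n-1$ edges of each red batch bijectively onto $e'_1, \ldots, e'_{n-1}$ (the first $n-1$ loops of $E_n$), consistently across all $k$ batches, and send the remaining $k$ edges — one per vertex, namely the edge $\bar e_m$ (ending at $v_1$) together with the $k-1$ line edges $\bar l_1,\ldots,\bar l_{k-1}$ (the line edge ending at $v_{j+1}$) — all to the single loop $e'_n$. Since $E_n$ has a single vertex, the compatibility $s_{E_n}\circ g^1 = g^0 \circ s_{F_{m,n}}$ and $t_{E_n}\circ g^1 = g^0 \circ t_{F_{m,n}}$ are automatic, so $g$ is a graph homomorphism. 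Properness is clear since all fibres are finite. For target-bijectivity, fix $x \in \{e'_1,\ldots,e'_n\}$; since $t_{E_n}(x) = v'$ and $(g^0)^{-1}(v') = \{v_1,\ldots,v_k\}$, I must check that $e \mapsto t_{F_{m,n}}(e)$ is a bijection from $(g^1)^{-1}(x)$ onto $\{v_1,\ldots,v_k\}$ — and this is exactly the statement that each of the $n$ "colours" $e'_1,\ldots,e'_n$ is realized by precisely one edge ending at each of the $k$ vertices, which follows from how the batches and the $\{e'_n\}$-preimage were distributed. The regularity condition $(g^0)^{-1}(\mathrm{reg}(E_n)) \subseteq \mathrm{reg}(F_{m,n})$ holds because $\mathrm{reg}(E_n) = \{v'\}$ and every $v_j$ is regular in $F_{m,n}$ (each emits at least one edge and finitely many).

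Having established admissibility, the theorem on Katsura functoriality gives a $*$-homomorphism $g^*\colon C^*(E_n) \to C^*(F_{m,n})$ with $g^*(S_{e'_i}) = \sum_{e \in (g^1)^{-1}(e'_i)} S_e$ and $g^*(P_{v'}) = \sum_{j=1}^k P_{v_j}$. Unitality is immediate: $g^*(1) = g^*(P_{v'}) = \sum_j P_{v_j} = 1_{C^*(F_{m,n})}$. For injectivity I would invoke the gauge-invariant uniqueness theorem: $C^*(E_n) = \mathcal O_n$ is simple (Cuntz), so any nonzero $*$-homomorphism out of it is injective — and $g^*$ is nonzero since $g^*(1) = 1 \neq 0$. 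This last point is the cleanest route; the main obstacle is not injectivity but rather the careful verification of target-bijectivity, i.e.\ making sure the assignment of the $k$ "leftover" edges to $e'_n$ (one ending at each vertex, using $\bar e_m$ for $v_1$ and the line edges for the rest) together with the per-batch bijections onto $e'_1,\ldots,e'_{n-1}$ genuinely yields, for each colour, exactly one edge into each vertex. I would organize this by tabulating, for each vertex $v_j$, the $n$ edges ending there and their $g^1$-images, confirming the images are $e'_1,\ldots,e'_n$ without repetition.
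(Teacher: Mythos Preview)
Your proposal is correct and supplies exactly the direct verification that the paper leaves implicit (the paper states the lemma but gives no proof, treating it as routine). Your edge count $(k-1)+(n-1)k+1=nk$, the observation that at $v_1$ the edge $\bar e_m$ replaces the missing incoming line edge so that every vertex receives exactly $n$ edges, the definition of $g$ sending each red batch bijectively onto $e'_1,\ldots,e'_{n-1}$ and the $k$ leftover edges $\{\bar e_m,\bar l_1,\ldots,\bar l_{k-1}\}$ onto $e'_n$, and the check of target-bijectivity are all correct and constitute the intended argument. One small cosmetic point: you mention the gauge-invariant uniqueness theorem but then (rightly) use simplicity of $\mathcal O_n$ instead; just drop the reference to gauge-invariance, since simplicity alone gives injectivity of any nonzero $*$-homomorphism out of $C^*(E_n)$.
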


We are now ready to prove the main result:
\begin{theorem}
Let $p,q,k,s\in\mathbb{N}\setminus \{0\}$ with $p,q\geq2$. Assume that these numbers satisfy the congruence
\[
(p-1)k = (q-1)s \;\text{for some}\; s\in\mathbb{N}\setminus\{0\}.
\]
 Then
there exists a finite sequence of admissible graph morphisms inducing injective unital $*$-homomorphisms between graph C*-
algebras whose composition is a unital embedding 
\[
\mathcal O_p\longrightarrow M_k(\mathcal O_q). 
\]
\end{theorem}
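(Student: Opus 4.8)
The plan is to realize the desired embedding $\mathcal O_p\to M_k(\mathcal O_q)$ as a composition of three $*$-homomorphisms, each coming functorially from a graph morphism of one of the two types recalled in Section~3, using the auxiliary graphs $F_{m,n}$ and $G_{m,k}$ of the previous subsection. First I would reduce the congruence $(p-1)k=(q-1)s$ to the setting of Lemmas~\ref{aux} and~\ref{pivot}: set $n:=q$ and $m:=1+(q-1)s$, so that by construction $m-1=(n-1)s$ with $s\ge 1$, and also $m-1=(p-1)k$, whence $p-1$ divides $m-1$. Thus $m$ plays the role of a common "blow-up size": it is simultaneously compatible with building $M_k(\mathcal O_p)$ from a line graph of length $k$ and with building an $n=q$ Cuntz structure.

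The composition I have in mind is
\[
\mathcal O_p\;\xrightarrow{\;h_*\;}\;\mathcal O_m\;\xrightarrow{\;\cong\;}\;C^*(E_m)\;\xrightarrow{\;(g^*)\ \text{and}\ (f_*,\varphi)\;}\;M_k(\mathcal O_q),
\]
made precise as follows. Step one: apply the Kawamura construction of Subsection~\ref{kawa} with the divisibility $p-1\mid m-1$ (here $m-1=(p-1)k$) to obtain an admissible path homomorphism $E_p\to E_m$ inducing a unital injective $*$-homomorphism $\mathcal O_p\to\mathcal O_m$. Step two: use Lemma~\ref{pivot} with the pair $(m,q)$ — legitimate since $m-1=(q-1)s$ — to get the unital injective $g^*\colon C^*(E_q)=\mathcal O_q\to C^*(F_{m,q})$; but I actually need to go \emph{into} $M_k(\mathcal O_q)$, so I instead read Lemma~\ref{pivot} in the form it is stated and combine it with Lemma~\ref{aux} and Proposition~\ref{cmat}. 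Concretely, Lemma~\ref{aux} gives a $*$-isomorphism $f_*\colon C^*(F_{m,q})\xrightarrow{\ \cong\ }C^*(G_{m,k})$ (taking $n=q$ there), and Proposition~\ref{cmat} gives $\varphi\colon C^*(G_{m,k})\xrightarrow{\ \cong\ }M_k(\mathcal O_m)$. So from $C^*(F_{m,q})$ we reach $M_k(\mathcal O_m)$ isomorphically. It remains to connect $\mathcal O_m$ (the target of step one) with $M_k(\mathcal O_q)$, and this is exactly where $F_{m,q}$ does double duty: on the one hand $C^*(F_{m,q})\cong M_k(\mathcal O_m)$ via $\varphi\circ f_*$, and on the other hand Lemma~\ref{pivot} produces $g^*\colon\mathcal O_q\to C^*(F_{m,q})$ which is \emph{not} what is wanted either. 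Let me reorganize: the honest composition is $\mathcal O_p\xrightarrow{\text{Kawamura}}\mathcal O_m$, then $\mathcal O_m\hookrightarrow M_k(\mathcal O_m)$ (upper-left corner), then one must relate $M_k(\mathcal O_m)$ to $M_k(\mathcal O_q)$ — and the tool for the last relation is again a graph morphism between $G_{m,k}$ (or $F_{m,q}$) and a blow-up of $E_q$, giving the needed $*$-homomorphism after conjugating by $\varphi$, $f_*$, and the analogous isomorphism on the $\mathcal O_q$-side.

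In slightly more detail, the final arrow $M_k(\mathcal O_m)\to M_k(\mathcal O_q)$ I would obtain by a \emph{contravariant} step: there is an admissible graph homomorphism from a blow-up $G_{q,k}$ of $E_q$ \emph{to} $F_{m,q}$ — intuitively $F_{m,q}$ is a covering-type refinement of $G_{q,k}$ matching the batch structure $m-1=(q-1)s$ — and Katsura's functor then yields $C^*(F_{m,q})\to$ well, it goes the wrong way, so in fact the cleanest route is: use Lemma~\ref{pivot} to land in $C^*(F_{m,q})$ from $C^*(E_q)=\mathcal O_q$, observe $C^*(F_{m,q})\cong M_k(\mathcal O_m)$ by $\varphi\circ f_*$, tensor/amplify the Kawamura embedding $\mathcal O_p\to\mathcal O_m$ to $M_k(\mathcal O_p)\to M_k(\mathcal O_m)$, and finally compose $\mathcal O_p\hookrightarrow M_k(\mathcal O_p)\to M_k(\mathcal O_m)\xrightarrow{(\varphi f_*)^{-1}}C^*(F_{m,q})$; but then we have $\mathcal O_p\to C^*(F_{m,q})$, and $g^*$ runs $\mathcal O_q\to C^*(F_{m,q})$ the \emph{opposite} direction from what the theorem demands, so the correct reading must be that the theorem's embedding $\mathcal O_p\to M_k(\mathcal O_q)$ is assembled by: $\mathcal O_p\xrightarrow{\text{Kawamura}}\mathcal O_m\xrightarrow{\cong}C^*(E_m)$, then a path homomorphism $E_m\to F_{q,?}$-type graph giving $\mathcal O_m\to C^*(F)$ with $C^*(F)\cong M_k(\mathcal O_q)$ by combining the analogues of Lemma~\ref{aux} and Proposition~\ref{cmat} for the pair $(q,k)$ after checking $q-1\mid$ the relevant blow-up size, which holds because $(p-1)k=(q-1)s$. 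I would then verify unitality (each generator-sum hits the unit by the Cuntz relation, and the corner/amplification sends $1$ to $1$ since we use the full $k$-line graph with all $k$ vertices) and injectivity (Kawamura's map is injective as $\mathcal O_p$ is simple; the graph isomorphisms are injective; amplification and the functorial $*$-homs are injective because their sources are simple), and conclude by composing.

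\textbf{Main obstacle.} The delicate point is bookkeeping the divisibility: one must choose the intermediate integer $m$ (equivalently the common blow-up size $N$) so that $N-1$ is simultaneously divisible by $p-1$ and realizes a $k$-line-graph blow-up compatible with the $q$-Cuntz structure, i.e.\ so that \emph{both} $p-1\mid N-1$ and the batch decomposition $N-1=(q-1)\cdot(\text{integer})$ underlying $F_{N,q}$ and $G_{N,k}$ are available; the hypothesis $(p-1)k=(q-1)s$ is exactly what makes $N:=1+(p-1)k=1+(q-1)s$ work. A secondary, purely mechanical obstacle is checking that the composite graph morphism — in particular whichever step is forced to be the \emph{contravariant} one (if any) — satisfies \emph{all} of the admissibility hypotheses (properness, target-bijectivity, regularity, or on the covariant side vertex-injectivity, monotonicity, and the rather intricate regularity of Definition~\ref{regularity}); but since Lemmas~\ref{aux} and~\ref{pivot} and Proposition~\ref{cmat} already package these verifications, the remaining work is just confirming that composition of admissible morphisms is admissible (which holds since each forms a category) and that the composite $*$-homomorphism is unital and injective, both of which follow from the corresponding properties of the pieces. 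Thus the theorem follows by concatenating the arrows above.
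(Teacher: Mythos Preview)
Your proposal never converges to a working chain of maps, and the reason is a single wrong choice at the outset: you set $n:=q$. With that choice, the graph $F_{m,q}$ has $s$ vertices (because $m-1=(q-1)s$), so Lemma~\ref{aux} only gives $C^*(F_{m,q})\cong C^*(G_{m,s})\cong M_s(\mathcal O_m)$, not $M_k(\mathcal O_m)$; and Lemma~\ref{pivot} produces $g^*\colon \mathcal O_q\to C^*(F_{m,q})$, which, as you noticed repeatedly, points the wrong way for building a map \emph{into} $M_k(\mathcal O_q)$. Every subsequent ``reorganization'' in your write-up is an attempt to repair this, and none of them does: the corner embedding $\mathcal O_p\hookrightarrow M_k(\mathcal O_p)$ is not induced by an admissible graph morphism and is not unital, the hoped-for contravariant arrow between $G_{q,k}$ and $F_{m,q}$ goes the wrong direction, and the final ``path homomorphism $E_m\to F_{q,?}$'' is left unspecified (and would not be unital since $E_m$ has a single vertex).

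The paper's fix is simply to take $n:=p$ (and $l=1$, $r:=s$, $m:=(q-1)s+1$). Then $m-1=(p-1)k=(n-1)k$, so $F_{m,n}=F_{m,p}$ has exactly $k$ vertices, Lemma~\ref{aux} gives $C^*(F_{m,p})\cong C^*(G_{m,k})$, and Proposition~\ref{cmat} identifies this with $M_k(\mathcal O_m)$ with the \emph{correct} matrix size. Crucially, Lemma~\ref{pivot} now yields $g^*\colon \mathcal O_p\to C^*(F_{m,p})$, i.e.\ the contravariant step is used at the \emph{source} end, where its direction is exactly what is needed. The remaining step $M_k(\mathcal O_m)\to M_k(\mathcal O_q)$ comes from a covariant Kawamura-type path homomorphism $e\colon G_{m,k}\to G_{q,k}$ (available because $m-1=(q-1)s$), giving the composite
\[
\mathcal O_p \xrightarrow{\,g^*\,} C^*(F_{m,p}) \xrightarrow{\,f_*\,} C^*(G_{m,k}) \xrightarrow{\,\varphi\,} M_k(\mathcal O_m) \xrightarrow{\,e_*\,} M_k(\mathcal O_q).
\]
Your ``Main obstacle'' paragraph correctly pins down $m=1+(p-1)k=1+(q-1)s$ as the right intermediate, but you never exploit that $m-1=(p-1)k$ is precisely the hypothesis of Lemmas~\ref{aux} and~\ref{pivot} with $n=p$, which is what makes the arrows line up.
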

\begin{proof}
First, assume that the following three congruences hold
\[
(q-1)r=m-1=(n-1)k,\quad p-1=(n-1)l,\quad l,r\in\mathbb{N}\setminus\{0\}.
\]
Then, by Lemma~\ref{pivot} and Lemma~\ref{aux}, there are admissible graph morphisms
\[
E_n\stackrel{g}{\longleftarrow}F_{m,n}\stackrel{f}{\longrightarrow}G_{m,k}
\]
inducing contravariantly and covariantly, respectively, unital injective $*$-homomorphisms between their graph C*-algebras:
\[
\mathcal O_n=C^*(E_n)\stackrel{g^*}{\longrightarrow}C^*(F_{m,n})\stackrel{f_*}{\longrightarrow}C^*(G_{m,k})
\cong M_k(\mathcal O_m).
\]
Here the last identification comes from Proposition~\ref{cmat}. 

Next, remembering the assumed congruences and using Kawamura's embeddings from Section~\ref{kawa}, 
we obtain the composed unital embedding:
\[
\mathcal O_p\stackrel{h_*}{\longrightarrow}\mathcal O_n\stackrel{f_*\circ g^*}{\longrightarrow}M_k(\mathcal O_m)
\stackrel{e_*}{\longrightarrow}M_k(\mathcal O_q).
\]
Here $E_p\stackrel{h}{\rightarrow}E_n$ and $G_{m,k}\stackrel{e}{\rightarrow}G_{q,k}$ are appropriate path 
homomorphisms of graphs.

Finally, given the congruence $(p-1)k = (q-1)s$, we obtain the above assumed three 
congruences by defining $r:=s$, $l=1$,  $n:=p$, and
$m:=(q-1)s+1$.
\end{proof}

\section*{Acknowledgements}
\noindent
This research is part of the EU Staff Exchange project 101086394 \emph{Operator Algebras That One Can See}.
The project is co-financed by the Polish Ministry of Education and Science under the program PMW (grant agreement 5305/HE/2023/2).
The authors are very grateful to  Jack  Spielberg for inspiration, to Francesco D'Andrea and Mariusz Tobolski for discussions, 
and to S\o ren Eilers for advice. The first author is also
delighted to thank the Arizona State University in Tempe,
where the work on this paper got started, and to SISSA in Trieste, where the  work on this paper was carried out, for great hospitality.

\end{document}